\theoremstyle{definition}
\newtheorem{Corollary}{Corollary}
\newtheorem{Theorem}{Theorem}
\begin{document}

\title{Explicit high-order symplectic integrators for charged particles in general electromagnetic fields}

\author{Molei Tao (\url{mtao@gatech.edu})}

\date{May 13, 2015}

\maketitle

\begin{abstract}
\noindent
This article considers non-relativistic charged particle dynamics in both static and non-static electromagnetic fields, which are governed by nonseparable, possibly time-dependent Hamiltonians. For the first time, explicit symplectic integrators of arbitrary high-orders are constructed for accurate and efficient simulations of such mechanical systems. Performances superior to the standard non-symplectic method of Runge-Kutta are demonstrated on two examples: the first is on the confined motion of a particle in a static toroidal magnetic field used in tokamak; the second is on how time-periodic perturbations to a magnetic field inject energy into a particle via parametric resonance at a specific frequency.
\end{abstract}

\section{Introduction}
Simulations of the possibly chaotic dynamics of charged particles require high accuracies over long time intervals. The conservation of particle energy, which collision cross-section typically depends on, is also preferred (e.g., \cite{mao2011comparison}). These considerations naturally place symplectic integrators as popular candidates of choice\footnote{Note non-symplectic integrators including Runge-Kutta methods (e.g., \cite{butcher2008numerical,MR1227985}) and Boris' method \cite{boris1970relativistic,birdsall2014plasma} have also been popular.}: based on their preservation of phase-space volume (see e.g., reviews in \cite{sanz1992symplectic,MaWe:01,Hairer06}), symplectic integrators nearly preserve the energy of an autonomous mechanical system (by backward analysis; see e.g., \cite{Hairer06}), correctly account for energy injections and removals in non-conservative mechanical systems (see e.g., \cite{ObTa13}), conserve momentum maps (by a discrete Noether's theorem; see e.g., \cite{MaWe:01}), and demonstrate suitable for long time simulations (e.g., \cite{quispel1998volume,Hairer06}) and chaotic systems (e.g., \cite{mclachlan1992accuracy}). In addition, a need for accuracy in the trajectory calls for high-order symplectic integrators. At the same time, as one often estimates statistics by tracking a large ensemble of particles, computational efficiency is critical, and integrators that use explicitly defined updates are desired.

Although by now it is well known how to construct explicit high-order symplectic integrators for separable Hamiltonians (i.e. $H(q,p)=K(p)+V(q)$; see e.g., \cite{yoshida1990construction, suzuki1990fractal, forest1990fourth, wisdom1991symplectic, sanz1992symplectic, quispel1998volume, MaWe:01, mclachlan2002splitting, leimkuhler2004simulating, Hairer06}), a charged particle is governed by, unfortunately, a nonseparable Hamiltonian. Generic symplectic integrators with implicitly defined updates (e.g., \cite{sanz1988runge}) still apply, but they are computationally much more expensive. Remarkably, an explicit 2nd-order symplectic integrator for static magnetic fields was obtained in \cite{wu2003explicit} based on splitting method. While that pioneering work is generalizable to high-order methods, it doesn't extend to time-dependent fields, and there are certain static fields to which it doesn't apply either (see Comparison in section \ref{sec_Method}). Another explicit symplectic method based on generating function was recently described in \cite{Tao15PRE}; it is, however, only 1st-order.

As difficulties in constructing explicit high-order symplectic integrators for general charged particles are well recognized (see e.g., \cite{he2015volume} for a review, and \cite{webb2014symplectic,Zhang201543} for an instance of an unsuccessful but useful effort), alternative approaches that keep a fraction of the structure-preserving properties of symplectic integrators have been proposed. Particularly significant among them is Boris' method \cite{boris1970relativistic} and the class of volume-preserving integrators \cite{he2015volume}, the latter recently constructed and shown to contain the former.

This article solves the aforementioned difficulties by proposing a family of integrators that are explicit, high-order, and fully symplectic. This family is based on a shadowing theorem and Runge-Kutta methods, and called ESSRK (Explicit Symplectic Shadowed Runge-Kutta; suggested pronunciation: 'e-serk). Any even-order-of-accuracy version of ESSRK can explicitly constructed, and it works for both static and time-dependent electromagnetic fields. The method is described in section \ref{sec_Method}, followed by performance tests on a static example in section \ref{sec_example1} (a charged particle in tokamak) and a time-dependent example in section \ref{sec_example2} (magnetic parametric excitation). In these examples, ESSRK demonstrates superior long-time performances: it makes little amplitude errors, and even its phase errors, which are much more significant, are still smaller than errors of a standard non-symplectic method (Runge-Kutta) with the same order of accuracy.

\section{Method and properties}
\label{sec_Method}
A non-relativistic charged particle in a possibly time-dependent electromagnetic field corresponds to the Hamiltonian
\begin{equation}
	H(\textbf{q},\textbf{p},t)=\frac{1}{2 m} \| \textbf{p}-e \textbf{A}(\textbf{q},t) \|_2^2 + e \phi(\textbf{q},t),
	\label{eq_Hamiltonian}
\end{equation}
where $\textbf{q}=[q_1,q_2,q_3]=[x,y,z]$ is the particle's position and $\textbf{p}=[p_1,p_2,p_3]$ is its momentum, $\textbf{A}$ is the magnetic vector potential ($\textbf{B}=\nabla \times \textbf{A}$) and $\phi$ is the electric scalar potential ($\textbf{E}=\nabla \phi$), and $m$ and $e$ are particle mass and charge. The particle dynamics is governed by Hamilton's equation $\dot{\textbf{q}}=\partial H/\partial \textbf{p}$, $\dot{\textbf{p}}=-\partial H/\partial \textbf{q}$.

\paragraph{The flow maps.} To simulate such a system, I adopt a Hamiltonian splitting approach and write $H=H_1+H_2$, where $H_1=\| \textbf{p} \|_2^2/(2m)$ and $H_2=-e\langle \textbf{p},\textbf{A}(\textbf{q},t) \rangle/m + e^2\|\textbf{A}(\textbf{q},t) \|_2^2/(2m)+e\phi(\textbf{q},t)$. Let $h$ be the integration timestep, and $\psi_1(h)$ be the $h$-time flow map of $H_1$, which is given by
\[
	\psi_1(h):[\textbf{q},\textbf{p}] \mapsto [\textbf{Q},\textbf{P}]:= [\textbf{q}+h \textbf{p}/m, \textbf{p}].
\]
The time $t$ to $t+h$ flow map of $H_2$, indicated by $\bar{\psi}_2(t+h,t)$, is not analytically available, but a good approximation $\psi_2(t+h,t)$ can be obtained by using Runge-Kutta approximation for position and then a symplectically shadowed momentum update. More precisely, let $f(\textbf{q},t)=e^2\|\textbf{A}(\textbf{q},t) \|_2^2/(2m)+e\phi(\textbf{q},t)$, and then $H_2$ generates the dynamics
\begin{equation} \begin{cases}
	\dot{\textbf{q}}=-(e/m) \textbf{A}(\textbf{q},t) \\
	\dot{\textbf{p}}=(e/m) \textbf{p} \textbf{A}'(\textbf{q},t)-\nabla f(\textbf{q},t)
\end{cases}, 
\label{eq_H2dynamics}
\end{equation}
where $\textbf{q}$, $\textbf{p}$ and $\textbf{A}$ are viewed as row vectors, $\nabla f$ is a row vector containing three spatial derivatives, and $[\textbf{A}']_{ij}:=\partial A_i / \partial q_j$ (the same conventions will be used throughout this paper for such differential operators). The fact that $\textbf{q}$ dynamics shadow $\textbf{p}$'s is utilized by letting
\begin{align*}
	\textbf{k}_i (\textbf{q}) &= -(e/m) \textbf{A}\left(\textbf{q}+h\sum_{j=1}^{i-1} a_{ij} \textbf{k}_j(\textbf{q}), t+h\sum_{j=1}^{i-1} a_{ij} \right) \\
	\textbf{k}'_i (\textbf{q}) &= -(e/m) \textbf{A}'\left(\textbf{q}+h\sum_{j=1}^{i-1} a_{ij} \textbf{k}_j(\textbf{q}), t+h\sum_{j=1}^{i-1} a_{ij} \right) \left(\textbf{I}+h\sum_{j=1}^{i-1} a_{ij} \textbf{k}'_j(\textbf{q})\right) \\
	\nabla l_i (\textbf{q}) &= \nabla f\left(\textbf{q}+h\sum_{j=1}^{i-1} a_{ij} \textbf{k}_j(\textbf{q}), t+h\sum_{j=1}^{i-1} a_{ij} \right) \left(\textbf{I}+h\sum_{j=1}^{i-1} a_{ij} \textbf{k}'_j(\textbf{q})\right)
\end{align*}
be explicitly computed for $i=1,\cdots, s$, where dependence on $t$ and $h$ are implicitly assumed for notational brevity. Then $\psi_2(t+h,t):[\textbf{q},\textbf{p}] \mapsto [\textbf{Q},\textbf{P}]$ defined by
\begin{align}
	\textbf{Q} &= \textbf{q}+h \sum_{i=1}^s b_i \textbf{k}_i (\textbf{q}) \nonumber\\
	\textbf{P} &= \left( \textbf{p}- h \sum_{i=1}^s b_i \nabla l_i(\textbf{q}) \right) \left(\textbf{I}+h\sum_{i=1}^{s} b_i \textbf{k}'_i(\textbf{q})\right)^{-1}
\label{eq_psi2}
\end{align}
is (i) a symplectic map for any $t$ and $h$, and (ii) an $\mathcal{O}(h^{p+1})$ approximation of $\bar{\psi}_2(t+h,t)$ as long as $s$, $a_{ij}$ and $b_i$ are parameters of a generic $p$-th order Runge-Kutta method (such parameters values exist for arbitrary positive integer $p$; e.g., \cite{butcher2008numerical,MR1227985}).

To prove these properties, let 
$l_i(\textbf{q})=f\left(\textbf{q}+h\sum_{j=1}^{i-1} a_{ij} \textbf{k}_j(\textbf{q}), t+h\sum_{j=1}^{i-1} a_{ij} \right)$,
$\textbf{g}(\textbf{q})=\textbf{q}+h \sum_{i=1}^s b_i \textbf{k}_i (\textbf{q})$, and $c(\textbf{q})=h\sum_{i=1}^{s} b_i l_i(\textbf{q})$, and then \eqref{eq_psi2} can be verified to be $\textbf{Q}=\textbf{g}(\textbf{q}), \textbf{P}=(\textbf{p}-\nabla c(\textbf{q}))\left(\textbf{g}'(\textbf{q})\right)^{-1}$.

(i) For any fixed $t$ and $h$, consider a generating function of 2nd kind $S(\textbf{P},\textbf{q})=\langle \textbf{P},\textbf{g}(\textbf{q}) \rangle + c(\textbf{q})$. $\psi_2$ is symplectic because it corresponds to a canonical transformation $\textbf{Q}=\partial S/\partial \textbf{P}$, $\textbf{p}=\partial S/\partial \textbf{q}$.

(ii) Two observations help quantify the accuracy of $\psi_2$. First, $\textbf{p}$ is shadowed by $\textbf{q}$ dynamics in \eqref{eq_H2dynamics}, in the sense that if $\textbf{q}(t)$ is exactly available then $\textbf{p}(t)$ can be explicitly obtained (note this is nontrivial: even though $\textbf{p}$ satisfies a linear equation given $\textbf{q}$, the linear coefficient $-(e/m)\textbf{A}'(\textbf{q},t)$ is a time-dependent matrix, which makes a closed-form solution not obvious). Second, note $\textbf{q}$ is approximated by a standard Runge-Kutta method, which introduces an $\mathcal{O}(h^{p+1})$ error, and $\textbf{p}$, obtained via shadowing, will have an error at the same order. These observations are made precise in the appendix.

\paragraph{The integrator based on flow composition.} It is known that the flow of a time independent Hamiltonian $H=H_1+H_2$ can be approximated to arbitrary high order by a careful alternating composition of flows of $H_1$ and $H_2$ (e.g., \cite{suzuki1990fractal, yoshida1990construction, mclachlan2002splitting, Hairer06}). This powerful tool extends to the time-dependent system \eqref{eq_Hamiltonian} (provable upon the introduction of a dummy time variable). Specifically, let $\Psi(t+h,t)$ be the flow map of $H$, and then it has a 2nd-order approximation given by
$\Psi(t+h,t)=\Theta_2(t+h,t)+\mathcal{O}(h^3)$, where
\begin{equation}
	\Theta_2(t+h,t):= \psi_1\left(t+h,t+h/2\right) \circ \bar{\psi}_2 (t+h,t) \circ \psi_1\left(t+h/2,t \right).
	\label{eq_2ndOrderComposition}
\end{equation}
Furthermore, a $(p+2)^{th}$-order approximation can be constructed from $p^{th}$-order via
\begin{equation}
	\Theta_{p+2}(t+h,t) = \Theta_p\left(t+h,t+(1-\gamma_p)h\right) \circ \Theta_p\left(t+(1-\gamma_p) h,t+\gamma_p h\right) \circ \Theta_p\left(t+\gamma_p h,t\right),
	\label{eq_highOrderComposition}
\end{equation}
where $\gamma_p=1/(2-2^{1/(p+1)})$. Hence, given an arbitrary even $p$, $\Theta_{p}(t+h,t)$ that satisfies $\Psi(t+h,t)=\Theta_{p}(t+h,t)+\mathcal{O}(h^{p+1})$ can be iteratively constructed.

The problem is $\bar{\psi}_2$ is unavailable. However, $\psi_2$, obtained by shadowed Runge-Kutta \eqref{eq_psi2}, is an $p$-th order approximation $\psi_2$. Replacing $\bar{\psi}_2$ by $\psi_2$ in $\Theta_p$ leads to an additional error, but the total error remains $\mathcal{O}(h^{p+1})$. This way, high-order integrators for \eqref{eq_Hamiltonian} are constructed, and they are symplectic because both $\psi_1$ and $\psi_2$ are symplectic and hence so are their compositions.

The resulting integrators are called ESSRK. A recommended 4th-order ESSRK updates from $[\textbf{q}_n,\textbf{p}_n]$ at time $t_n$ to $[\textbf{q}_{n+1},\textbf{p}_{n+1}]$ at time $t_{n+1}=t_n+h$, based on
\begin{equation} \begin{cases}
	\textbf{q}_{n,1} &= \textbf{q}_n+(\gamma h /2) \textbf{p}_n/m \\
	[\textbf{q}_{n,2},\textbf{p}_{n,2}] &= \psi_2(t_n+\gamma h, t_n)[\textbf{q}_{n,1},\textbf{p}_n] \\
	\textbf{q}_{n,3} &= \textbf{q}_{n,2}+(h/2-\gamma h/2) \textbf{p}_{n,2}/m \\
	[\textbf{q}_{n,4},\textbf{p}_{n,4}] &= \psi_2(t_n+(1-\gamma)h, t_n+\gamma h)[\textbf{q}_{n,3},\textbf{p}_{n,2}] \\
	\textbf{q}_{n,5} &= \textbf{q}_{n,4}+(h/2-\gamma h/2) \textbf{p}_{n,4}/m \\
	[\textbf{q}_{n,6},\textbf{p}_{n+1}] &= \psi_2(t_n+h, t_n+(1-\gamma) h)[\textbf{q}_{n,5},\textbf{p}_{n,4}] \\
	\textbf{q}_{n+1} &= \textbf{q}_{n,6}+(\gamma h /2) \textbf{p}_{n+1}/m
\end{cases},
\label{eq_ESSRK4th}
\end{equation}
where $\gamma=1/(2-2^{1/3})$, and $\psi_2$ uses $s=4$; $b_1=\frac{1}{6}, b_2=\frac{2}{6}, b_3=\frac{2}{6}, b_4=\frac{1}{6}$; $a_{21}=\frac{1}{2}, a_{32}=\frac{1}{2}, a_{43}=1$ and other $a_{ij}$'s are 0. Note adjacent $\psi_1$'s in the composition \eqref{eq_highOrderComposition} have been absorbed into a single substep since $\psi_1$ forms a semigroup, and therefore $\Theta_4$ consists of four $\psi_1$ substeps and three $\psi_2$ updates, each of which involves four stages.

In general, ESSRK of arbitrary order $p$ can be viewed as an alternating composition
\[
	\psi_1(s_{l_p+1},s_{l_p}) \circ \psi_2(\tau_{l_p},\tau_{l_p-1}) \circ \psi_1(s_{l_p},s_{l_p-1}) \circ \cdots \circ \psi_1(s_2,s_1) \circ \psi_2(\tau_1,\tau_0) \circ \psi_1(s_1,s_0),
\]
where $s_0=\tau_0=t_n$, $s_{l_p+1}=\tau_{l_p}=t_{n+1}$, and other nodes can be computed using \eqref{eq_highOrderComposition} for arbitrary $p$ (with $l_{p+2}=3l_p-2$). Note when $p$ is large it is possible to obtain alternative values of $s$ and $\tau$'s with a smaller $l_p$, and hence a reduced number of substeps; this is based on order conditions obtained from free Lie algebra theory (see e.g., \cite{mclachlan1995numerical, murua1999order, mclachlan2002splitting}).

\paragraph{Comparison with an existing method.} There is one existing approach for explicit high-order symplectic integrations of charged particles dynamics (see \cite{wu2003explicit,forest2006geometric}). Although remarkable (\cite{Dragt2015Lie} Chapter 12.9), that approach only works for static electromagnetic fields, while ESSRK works for time-dependent fields as well.

In addition, there are static fields to which the existing method doesn't apply. More specifically, the existing method is based on splitting and canonical transformation --- the Hamiltonian $H=\|\textbf{p}-e \textbf{A}(\textbf{q})\|^2/(2m)+e \phi(\textbf{q})$ is decomposed as a sum of $H_i=(p_i-e A_i(\textbf{q}))^2/(2m)$, $i=1,2,3$ and $H_4=e \phi(\textbf{q})$, and the flow of $H$ is approximated by composing the flows $\varphi_i(h)$ of $H_i$ ($i=1,\cdots,4$). $\varphi_1$ can be obtained by introducing a canonical transformation $\textbf{Q}=\textbf{q}$, $\textbf{P}=\textbf{p}-\frac{\partial S_1}{\partial \textbf{q}}$, where
\begin{equation}
	S_1(\textbf{q})=e \int^{q_1} A_1(\hat{q}_1,q_2,q_3) d \hat{q}_1. \label{eq_existingMethodGeneratingFct}
\end{equation}
Under this transformation, $H_1$ becomes $H_1(\textbf{Q},\textbf{P})=P_1^2/2m$, and its flow corresponds to a simple shift in $Q_1$. $\varphi_2$ and $\varphi_3$ can be analogously obtained, and $\varphi_4$ is simply a momentum shift. However, the integral in \eqref{eq_existingMethodGeneratingFct} is not always obtainable in closed-form --- a simple counterexample is $A_1(q_1,q_2,q_3)=\exp(-q_1^2)q_2$, in which case $\partial_{q_2} S_1$ cannot be computed in closed-form and hence the existing method doesn't apply. ESSRK doesn't have this problem because it requires only the governing equation ($\nabla\phi$, $\textbf{A}'$ and $\textbf{A}$).

A third difference is computational, in that medium-order ESSRK integrators involve less substeps. For instance, the existing method stated in \cite{wu2003explicit} is 2nd-order, obtained via the composition $\varphi_1(h/2) \circ \varphi_2 (h/2) \circ \varphi_3 (h/2) \circ \varphi_4 (h) \circ \varphi_3 (h/2) \circ \varphi_2 (h/2) \circ \varphi_1 (h/2)$. It involves 7 substeps. A 4th-order generalization can also be obtained using \eqref{eq_highOrderComposition}, and it will involve 19 substeps. In comparison, a 2nd-order ESSRK involves 4 substeps (1+2+1, the middle 2 corresponds to the two stages in a RK2 update), and the 4th-order ESSRK \eqref{eq_ESSRK4th} involves 1+4+1+4+1+4+1=16 substeps.

\paragraph{Generalization.} ESSRK straightforwardly extends to $N$ charged particles with an interacting potential that only depends on particle positions, i.e.
\[
	H(\textbf{q}_1,\textbf{p}_1,\cdots,\textbf{q}_N,\textbf{p}_N,t)=\sum_{j=1}^N \left( \frac{1}{2 m_j}\|\textbf{p}_j-e_j \textbf{A}_j(\textbf{q}_j,t)\|^2+e_j \phi_j(\textbf{q}_j,t) \right) + V(\textbf{q}_1,\cdots,\textbf{q}_N,t).
\]
The computational cost scales with $N$ without overhead (except for the unavoidable evaluation of $\nabla V$). This is because $\phi_j(\textbf{q}_j,t)$'s and $V(\textbf{q}_1,\cdots,\textbf{q}_N,t)$ can be absorbed into a single function $\phi(\textbf{q})$, whose contribution is accounted for by a single $\psi_2$ substep, and the matrix to be inverted in \eqref{eq_psi2} is block-diagonal.

\section{Example 1: a particle confined by a toroidal field}
\label{sec_example1}

Consider a charged particle in a toroidal magnetic field used in tokamak. I follow the model in \cite{cambon2014chaotic}, which uses the static magnetic field
\[
	\textbf{B}(r,\theta,\phi)=\frac{B_0 R}{R+r \cos(\theta)} \left( \hat{\textbf{e}}_\phi + \frac{r}{Q R} \hat{\textbf{e}}_\theta \right),
\]
where $r,\theta,\phi$ are toroidal coordinates, $R$, $B_0$, and $Q$ are constants (note the safety factor is denoted by $Q$ instead of $q$ used in \cite{cambon2014chaotic} to avoid confusion with the position variable). The corresponding vector potential in Cartesian coordinates under Coulomb gauge can be computed as
\[
	\textbf{A}(x,y,z)=B_0 \begin{bmatrix} -\frac{(\sqrt{x^2+y^2}-R)^2+z^2}{2 Q(x^2+y^2)} y, &
										   \frac{(\sqrt{x^2+y^2}-R)^2+z^2}{2 Q(x^2+y^2)} x, &
										   -R \log \left( \frac{\sqrt{x^2+y^2}}{R} \right) \end{bmatrix}.
\]
To demonstrate the applicability of ESSRK, I also add an electric field with scalar potential $\phi(x,y,z)=-E_0\cos(z)$.

\begin{figure}[htb]
\centering
\footnotesize
\subfigure[Runge-Kutta with $h=0.5$]{
\includegraphics[width=0.31\textwidth]{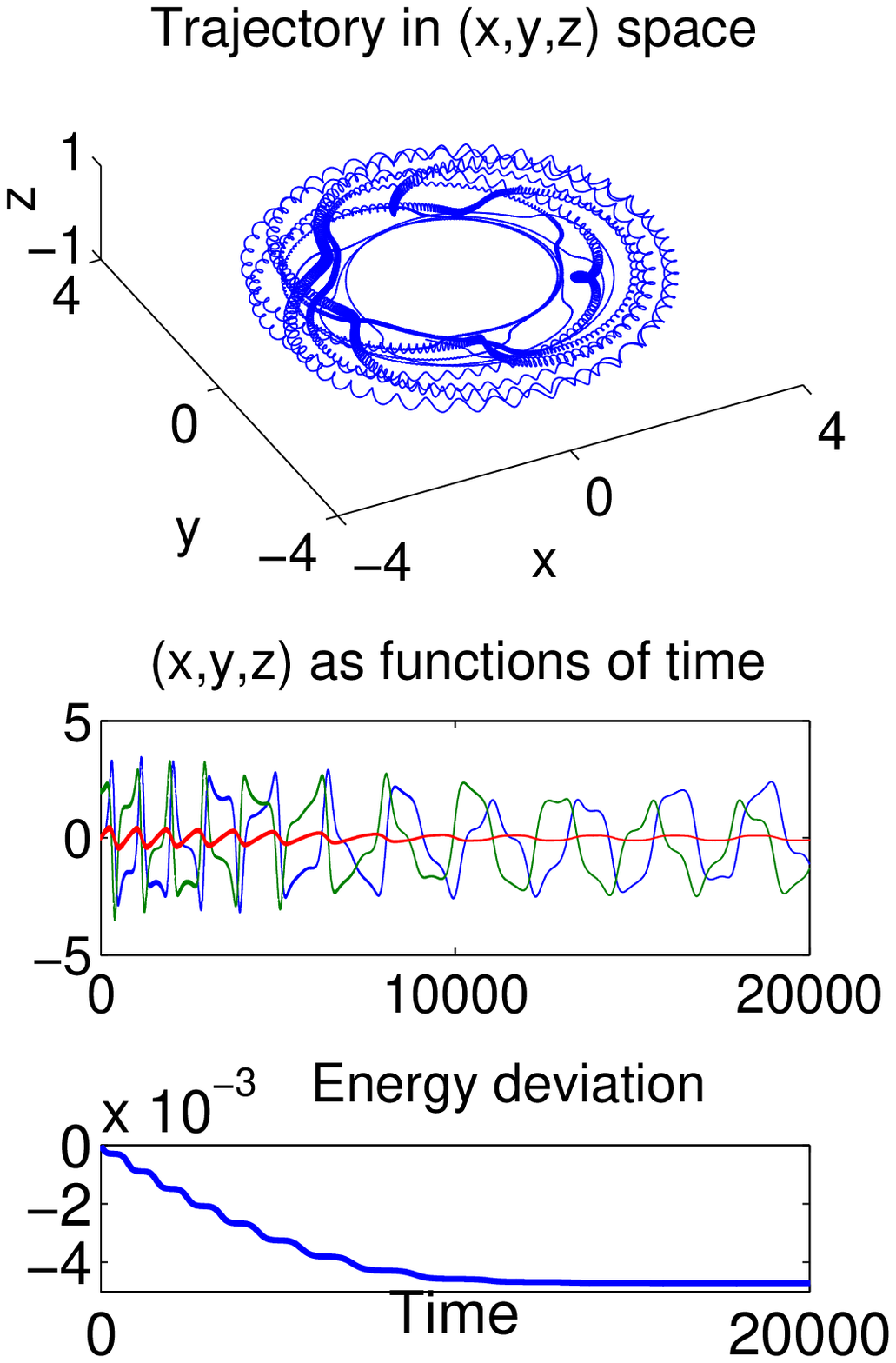}
\label{fig_tokamark_RKcoarse}
}
\subfigure[ESSRK with $h=0.5$]{
\includegraphics[width=0.31\textwidth]{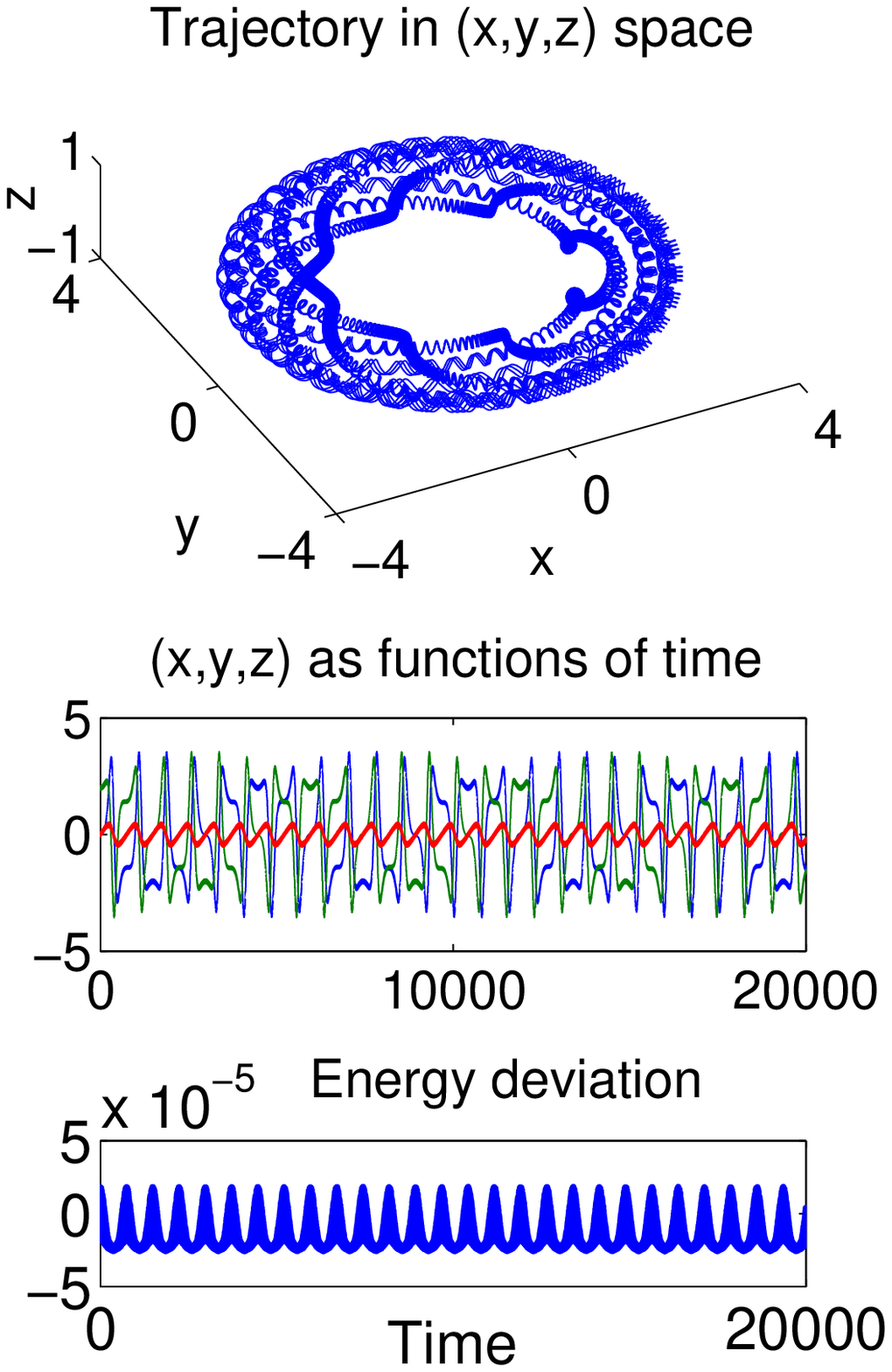}
\label{fig_tokamark_ESSRKcoarse}
}
\subfigure[Runge-Kutta with $h=0.01$ (benchmark)]{
\includegraphics[width=0.31\textwidth]{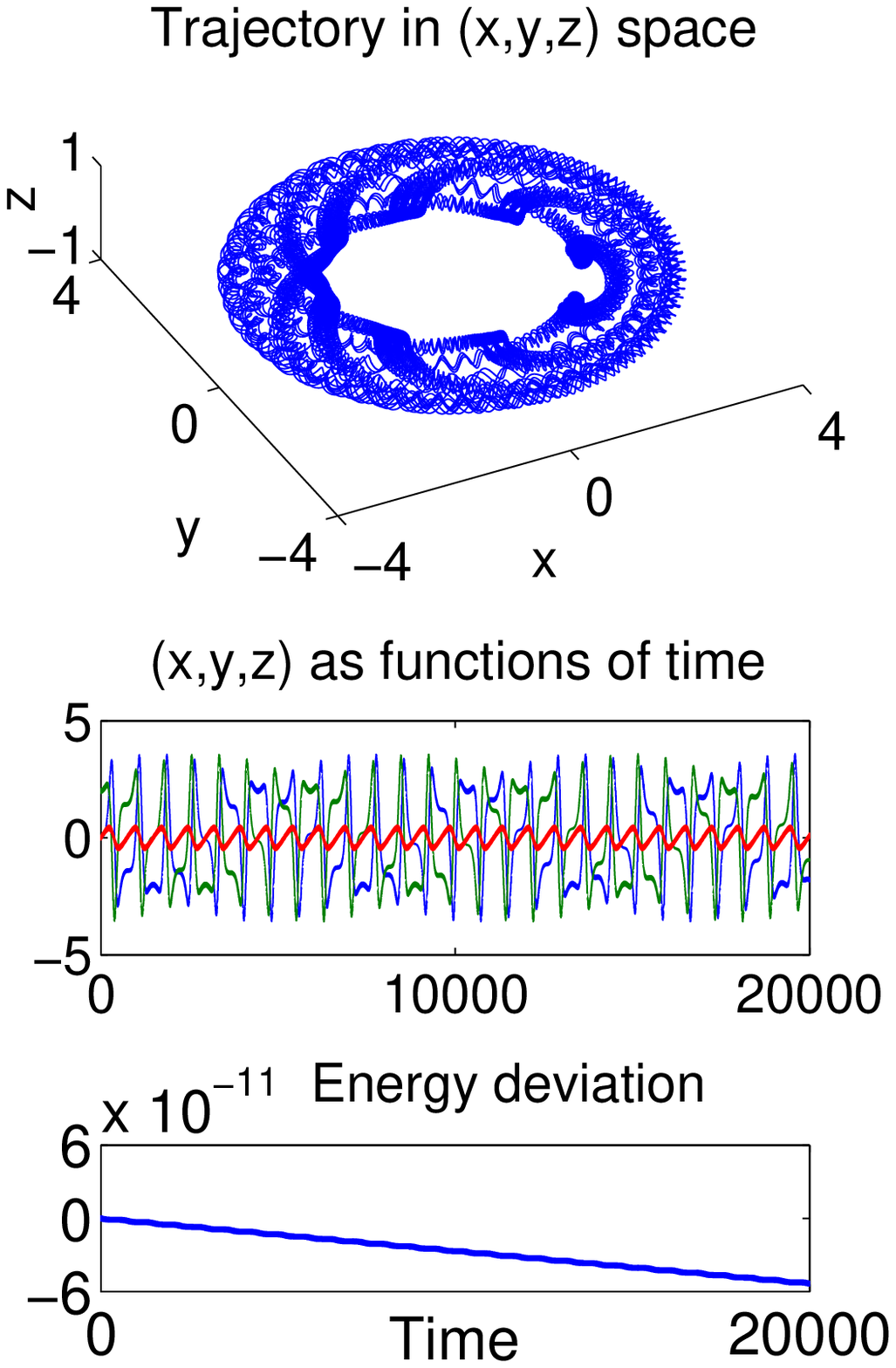}
\label{fig_tokamark_RKfine}
}
\caption{\footnotesize Simulations of a charged particle in tokamak. For simplicity in demonstration, adopt unitless convention and let charge $e=1$ and mass $m=1$. Use parameters $B_0=1$, $E_0=10^{-2}$, $R=2$, $Q=5$ and initial condition $q(0)=[0, 2.1, 0], p(0)=[0, 0, 0]$.}
\label{fig_tokamak}
\end{figure}

Figure \ref{fig_tokamak} compares the simulation by the 4th-order ESSRK \eqref{eq_ESSRK4th} with the standard 4th-order Runge-Kutta. Although both methods are 4th-order, standard RK loses accuracy in a long time simulation, and its lack of symplecticity results in numerical dissipation. ESSRK as a symplectic method has much better long time performance, and this is observed even when the timestep is large. Note symplecticity doesn't mean the elimination of numerical errors, and a scrutinized comparison between row 2 columns 2 and 3 shows that a large timestep still leads to phase errors; such errors are, of course, suppressed when ESSRK employs a small timestep (results not shown).

\section{Example 2: a charged particle in parametric resonance} 
\label{sec_example2}

Consider a spatial-homogeneous magnetic field with a fixed direction and periodically perturbed amplitude, which is assumed without loss of generality to be $B(t)=1+\epsilon \sin(\omega t)$ (along with $e=1,m=1$). Choose $x$-$y$ plane perpendicular to the magnetic field and Coulomb gauge so that $\textbf{A}(\textbf{q},t)=B(t)[q_2,-q_1,0]/2$. Consider $\textbf{q}(0)=[0, 2.1, 0]$ and $\textbf{p}(0)=[0, 0, 0]$.

In this simple example, amplitude and phase errors of a numerical simulation can be identified: analogous to guiding-center reduction (e.g., \cite{littlejohn1979guiding}), the velocity $\textbf{v}=\textbf{p}-\textbf{A}(\textbf{q},t)$ is represented in polar coordinates $v_1=v\sin\theta$, $v_2=v\cos\theta$, and then $v$ and $\theta$ respectively correspond to the slowly-varying amplitude and the fast-varying phase of the velocity oscillations.

In addition, temporal homogenization theory \cite{tao14temporal} helps show that when $\omega=1$ the particle experiences parametric resonance: let $E(t)=H(\textbf{q}(t),\textbf{p}(t))$, then $E(t)\approx e^{\epsilon t/2}(E(0)+\mathcal{O}(\epsilon))$ for the above initial condition. Note $t$ has to be large for the time-dependent magnetic field to pump into the particle an observable amount of energy.

\begin{figure}[htb]
\centering
\includegraphics[width=\textwidth]{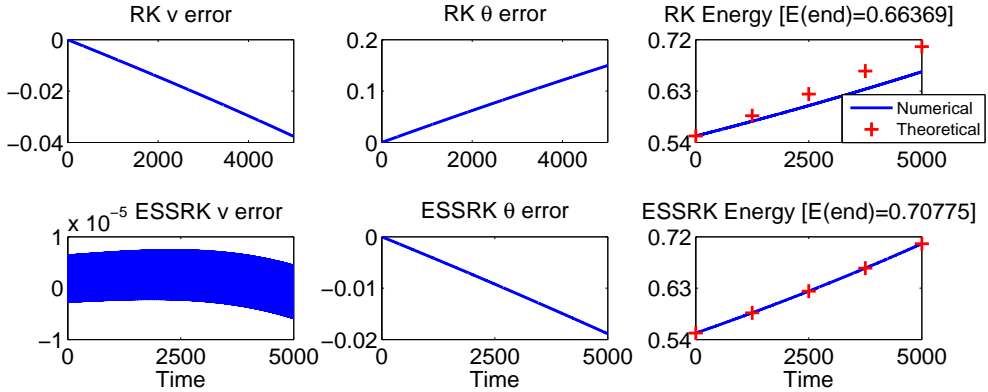}
\caption{\footnotesize Simulations of a charged particle in parametric resonance. $h=0.25$, $\epsilon=10^{-4}$, $T=\epsilon^{-1}/2$. Errors were obtained by comparing to a benchmark simulation by RK4 with $h=0.001$.}
\label{fig_PR}
\end{figure}

The long-time performances of the 4th-order ESSRK \eqref{eq_ESSRK4th} and the standard 4th-order Runge-Kutta are compared in terms of amplitude and phase errors in figure \ref{fig_PR} columns 1 and 2. ESSRK produces much smaller amplitude error due to its structure preservation property. At the same time, phase error of ESSRK is more significant, but it is nevertheless still one order of magnitude smaller than that of RK. In addition, numerically obtained particle energies are compared in figure \ref{fig_PR} column 3. Temporal homogenization shows that $E(\text{end})\approx 0.7078$, which agrees well with the ESSRK result. On the contrary, the non-symplectic simulation by RK is inaccurate, because while the particle gains energy from the perturbation, a large proportion of the gain is drained as a dissipative numerical artifact.

\section{Acknowledgment}
The author thanks Jonathan Goodman for introducing him to this interesting problem.

\section{Appendix: proof of the arbitrary order of accuracy}
\begin{Theorem}[Shadowing]
	Consider $\begin{cases} \dot{\textbf{q}}=-(e/m) \textbf{A}(\textbf{q},t) \\ \dot{\textbf{p}}=(e/m) \textbf{p} \textbf{A}'(\textbf{q},t)-\nabla f(\textbf{q},t) \end{cases}$. Define the flow map $\bar{\textbf{g}}(t,t_0,\textbf{x}):=\textbf{q}(t)$ given $\textbf{q}(t_0)=\textbf{x}$, and let $\bar{c}(t,t_0,\textbf{x})=\int_{t_0}^t f(\textbf{q}(\tau),\tau)d\tau$. Then
	\begin{equation}
		\textbf{p}(t)=\left( \textbf{p}(t_0)-\frac{\partial \bar{c}}{\partial \textbf{x}}(t,t_0,\textbf{x}) \right) \left[ \frac{\partial \bar{\textbf{g}}}{\partial \textbf{x}}(t,t_0,\textbf{x}) \right]^{-1}.
	\label{eq_shadowEqSln}
	\end{equation}
\end{Theorem}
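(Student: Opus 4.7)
The plan is to verify the formula~\eqref{eq_shadowEqSln} by direct substitution. Denote its right-hand side by $\textbf{p}^{*}(t)$. The observation driving the whole proof is that, once $\textbf{q}(t)=\bar{\textbf{g}}(t,t_0,\textbf{x})$ is regarded as known, the $\textbf{p}$-equation is \emph{linear} in $\textbf{p}$ with time-dependent coefficient $(e/m)\textbf{A}'(\bar{\textbf{g}}(t),t)$ and forcing $-\nabla f(\bar{\textbf{g}}(t),t)$; so by uniqueness of solutions of linear ODEs, it suffices to check (a) the initial condition $\textbf{p}^{*}(t_0)=\textbf{p}(t_0)$ and (b) that $\textbf{p}^{*}$ satisfies the same ODE. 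Conceptually, the formula is nothing but variation of parameters in closed form, with the rows of $[\partial\bar{\textbf{g}}/\partial\textbf{x}]^{-1}$ playing the role of a fundamental matrix for the homogeneous equation $\dot{\textbf{y}}=(e/m)\textbf{y}\textbf{A}'(\bar{\textbf{g}},t)$.

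Step (a) is immediate from the definitions: $\bar{\textbf{g}}(t_0,t_0,\textbf{x})=\textbf{x}$ gives $\partial\bar{\textbf{g}}/\partial\textbf{x}|_{t=t_0}=\textbf{I}$, and $\bar{c}(t_0,t_0,\textbf{x})=0$ gives $\partial\bar{c}/\partial\textbf{x}|_{t=t_0}=0$, so $\textbf{p}^{*}(t_0)=\textbf{p}(t_0)$.

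For step (b) I would first derive two auxiliary identities. Writing $G:=\partial\bar{\textbf{g}}/\partial\textbf{x}$ and $C:=\partial\bar{c}/\partial\textbf{x}$, differentiating the flow equation $\dot{\bar{\textbf{g}}}=-(e/m)\textbf{A}(\bar{\textbf{g}},t)$ with respect to $\textbf{x}$ (chain rule, using $[\textbf{A}']_{ij}=\partial A_i/\partial q_j$) yields the variational equation $\dot{G}=-(e/m)\textbf{A}'(\bar{\textbf{g}},t)G$. Interchanging $\partial/\partial\textbf{x}$ with the $t$-integration in $\bar{c}$ (justified by smoothness of $\textbf{A}$ and $f$) and using $\partial\bar{c}/\partial t=f(\bar{\textbf{g}},t)$ gives $\dot{C}=\nabla f(\bar{\textbf{g}},t)\,G$. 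Combining these with $\frac{d}{dt}G^{-1}=-G^{-1}\dot{G}G^{-1}$ in the derivative of $\textbf{p}^{*}=(\textbf{p}(t_0)-C)G^{-1}$ produces
\[\dot{\textbf{p}}^{*}=-\nabla f(\bar{\textbf{g}},t)+(e/m)\,\textbf{p}^{*}\textbf{A}'(\bar{\textbf{g}},t),\]
which matches the $\textbf{p}$-equation, completing step (b).

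The calculation is short, and the main place to stumble is bookkeeping of the row/column convention used in the paper: with $\textbf{p}$ a row vector and $[\textbf{A}']_{ij}=\partial A_i/\partial q_j$, the homogeneous equation is solved by \emph{rows} of $G^{-1}$, which is what forces $G^{-1}$ to multiply on the right in \eqref{eq_shadowEqSln}; under a different convention transposes would appear throughout. The only non-algebraic subtlety is the interchange of $\partial/\partial\textbf{x}$ and $\int_{t_0}^{t}$, which is standard given continuous differentiability of $\textbf{A}$ and $f$ in $(\textbf{q},t)$ on the relevant time interval.
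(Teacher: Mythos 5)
Your proof is correct and rests on exactly the same ingredients as the paper's: the variational equation $\dot{G}=-(e/m)\textbf{A}'(\bar{\textbf{g}},t)G$ and the identity $\dot{C}=\nabla f(\bar{\textbf{g}},t)G$, with the row-vector conventions handled properly. The only difference is direction of reading: the paper shows that $\textbf{p}(t)G+C$ has zero time derivative and then solves for $\textbf{p}(t)$, whereas you differentiate the candidate $(\textbf{p}(t_0)-C)G^{-1}$ and invoke uniqueness for linear ODEs --- algebraically the same computation, so this is essentially the paper's argument.
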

\begin{proof}
	By the definition of $\bar{\textbf{g}}$, $\partial \bar{\textbf{g}} / \partial t = -(e/m) \textbf{A}(\bar{\textbf{g}},t)$, and thus
	\[
		\frac{\partial}{\partial t} \left( \frac{\partial \bar{\textbf{g}}}{\partial \textbf{x}} \right) =
		\frac{\partial}{\partial \textbf{x}} \left( \frac{\partial \bar{\textbf{g}}}{\partial t} \right) =
		-\frac{e}{m} \textbf{A}' \frac{\partial \bar{\textbf{g}}}{\partial \textbf{x}}.
	\]
	Then,
	\begin{align*}
		&\quad \frac{\partial}{ \partial t}\left(\textbf{p}(t) \frac{\partial \bar{\textbf{g}}}{\partial \textbf{x}}(t,t_0,\textbf{x})+ \frac{\partial \bar{c}}{\partial \textbf{x}}(t,t_0,\textbf{x}) \right) \\
		&= \frac{e}{m} \textbf{p} \textbf{A}' \frac{\partial \bar{\textbf{g}}}{\partial \textbf{x}}- \nabla f \frac{\partial \bar{\textbf{g}}}{\partial \textbf{x}} - \frac{e}{m} \textbf{p} \textbf{A}'\frac{\partial \bar{\textbf{g}}}{\partial \textbf{x}}+ \frac{\partial}{\partial \textbf{x}} f(\bar{\textbf{g}}(t,t_0,\textbf{x}),t) \\
		&= \frac{e}{m} \textbf{p} \textbf{A}' \frac{\partial \bar{\textbf{g}}}{\partial \textbf{x}}- \nabla f \frac{\partial \bar{\textbf{g}}}{\partial \textbf{x}} - \frac{e}{m} \textbf{p} \textbf{A}'\frac{\partial \bar{\textbf{g}}}{\partial \textbf{x}}+ \nabla f \frac{\partial \bar{\textbf{g}}}{\partial \textbf{x}} = 0.
	\end{align*}
	Therefore,
	\[
		\textbf{p}(t) \frac{\partial \bar{\textbf{g}}}{\partial \textbf{x}}(t,t_0,\textbf{x})+ \frac{\partial \bar{c}}{\partial \textbf{x}}(t,t_0,\textbf{x})=\textbf{p}(t_0) \frac{\partial \bar{\textbf{g}}}{\partial \textbf{x}}(t_0,t_0,\textbf{x})+ \frac{\partial \bar{c}}{\partial \textbf{x}}(t_0,t_0,\textbf{x}).
	\]
	Since $\bar{\textbf{g}}(t_0,t_0,\textbf{x})=\textbf{x}$ and $\bar{c}(t_0,t_0,\textbf{x})=0$, this equality produces \eqref{eq_shadowEqSln} after rearranging terms.
\end{proof}

\begin{Corollary}[$\psi_2$ order of accuracy]
	Assume $s$ is the number of stages and $a_{ij}$, $b_i$ are coefficients of a $p$-th order Runge-Kutta method, then the update \eqref{eq_psi2} has $\mathcal{O}(h^{p+1})$ local truncation error.
\end{Corollary}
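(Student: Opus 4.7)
My plan is to compare the numerical update \eqref{eq_psi2} directly against the exact time-$h$ flow of \eqref{eq_H2dynamics}, which the shadowing theorem has conveniently written in the closed form
\[
\bar{\textbf{P}} = \bigl(\textbf{p} - \tfrac{\partial \bar{c}}{\partial \textbf{x}}\bigr) \bigl[\tfrac{\partial \bar{\textbf{g}}}{\partial \textbf{x}}\bigr]^{-1}, \qquad \bar{\textbf{Q}} = \bar{\textbf{g}}
\]
(arguments $(t_0{+}h,t_0,\textbf{q})$ suppressed). Because \eqref{eq_psi2} has exactly the same algebraic shape in terms of $\textbf{g}(\textbf{q})$, $c(\textbf{q})$, and their gradients, the task reduces to controlling the four quantities $\textbf{g}(\textbf{q})-\bar{\textbf{g}}$, $c(\textbf{q})-\bar{c}$, $\partial \textbf{g}/\partial \textbf{q} - \partial\bar{\textbf{g}}/\partial\textbf{x}$, and $\nabla c(\textbf{q}) - \partial\bar{c}/\partial\textbf{x}$, each at order $\mathcal{O}(h^{p+1})$, and then assembling.

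The first two bounds are classical Runge-Kutta accuracy. By inspection of the recursion for $\textbf{k}_i$, the formula $\textbf{g}(\textbf{q}) = \textbf{q} + h\sum b_i \textbf{k}_i(\textbf{q})$ is literally a $p$-th order explicit Runge-Kutta step applied to the $\textbf{q}$-equation of \eqref{eq_H2dynamics}; adjoining the trivial equation $\dot{c} = f(\textbf{q},t)$ with $c(t_0)=0$ and applying the same scheme yields $c(\textbf{q}) = h\sum b_i l_i(\textbf{q})$ as the RK quadrature for $\bar{c}$. Standard order-of-accuracy results for RK methods (e.g.\ \cite{butcher2008numerical,MR1227985}) then give the two $\mathcal{O}(h^{p+1})$ bounds directly.

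The derivative bounds form the central step. First, a short induction on $i$ using the chain rule identifies the recursively defined $\textbf{k}_i'(\textbf{q})$ and $\nabla l_i(\textbf{q})$ with $\partial\textbf{k}_i/\partial\textbf{q}$ and $\partial l_i/\partial\textbf{q}$, so the factors appearing in \eqref{eq_psi2} are exactly $\partial\textbf{g}/\partial\textbf{q}$ and $\partial c/\partial\textbf{q}$ of the Runge-Kutta maps. Next, I invoke the classical fact that differentiation with respect to initial data commutes with a Runge-Kutta step: the Jacobian of the numerical flow is the same Runge-Kutta method applied to the variational equation of the underlying ODE. Since the variational equation inherits the $p$-th order accuracy of the scheme, it follows that $\partial\textbf{g}/\partial\textbf{q}$ and $\partial c/\partial\textbf{q}$ approximate $\partial\bar{\textbf{g}}/\partial\textbf{x}$ and $\partial\bar{c}/\partial\textbf{x}$ to $\mathcal{O}(h^{p+1})$. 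I expect this derivative step to be the main obstacle in a rigorous write-up; it is standard and may be justified either by citation or, if self-contained, by matching Taylor expansions of the stages $\textbf{k}_i$ and their $\textbf{q}$-derivatives against those of the exact solution and its variation.

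To finish, I assemble. For $h$ small enough, both $\partial\textbf{g}/\partial\textbf{q} = \textbf{I}+\mathcal{O}(h)$ and $\partial\bar{\textbf{g}}/\partial\textbf{x} = \textbf{I}+\mathcal{O}(h)$ are invertible and close to $\textbf{I}$. The identity $B^{-1}-A^{-1}=A^{-1}(A-B)B^{-1}$ propagates the $\mathcal{O}(h^{p+1})$ gap to their inverses, and multiplying by the bounded factor $\textbf{p}-\nabla c(\textbf{q})$ (which is $\mathcal{O}(1)$, since $\nabla c(\textbf{q})=\mathcal{O}(h)$) yields $\|\textbf{P}-\bar{\textbf{P}}\|=\mathcal{O}(h^{p+1})$; the $\textbf{Q}$-component bound is immediate. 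Together these prove the claimed local truncation error.
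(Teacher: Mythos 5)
Your proposal is correct and follows essentially the same route as the paper: identify $\textbf{g}(\textbf{q})$ and $c(\textbf{q})$ as one step of the $p$-th order Runge--Kutta method applied to the augmented system $(\dot{\textbf{q}},\dot{z})=(-(e/m)\textbf{A},f)$, compare against the closed form supplied by the shadowing theorem, and propagate the $\mathcal{O}(h^{p+1})$ gap through the matrix inverse. The only difference is that where the paper dispatches the gradient estimates with a one-line appeal to regularity via Gronwall's lemma, you justify that step explicitly through the variational equation and the fact that the Jacobian of a Runge--Kutta map is the same method applied to the variational system --- a sound and somewhat more careful treatment of the same step.
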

\begin{proof}
	By the definition of a $p$-th order Runge-Kutta method, $\textbf{g}(\textbf{q}(t))=\textbf{q}(t+h)+\mathcal{O}(h^{p+1})$, and thus the position update has $(p+1)$-order truncation error.
	
	It can also be shown that $c(\textbf{q}(t)):=h\sum_{i=1}^s b_i l_(\textbf{q})=\int_t^{t+h} f(\textbf{q}(\tau)) d\tau + \mathcal{O}(h^{p+1})$ by considering an auxiliary system $\begin{cases} \dot{\textbf{q}}=-(e/m)\textbf{A}(\textbf{q},t) \\ \dot{z}=f(\textbf{q},t) \end{cases}$; one-step update of the same Runge-Kutta method applied to this augmented system leads to $\textbf{g}(\textbf{q}(t)) =\bar{\textbf{g}}(t+h,t, \textbf{q}(t))+\mathcal{O}(h^{p+1})$ and
	\[
		h\sum_{i=1}^s b_i l_i(\textbf{q}(t))=z(t+h)-z(t)+\mathcal{O}(h^{p+1}) = \int_t^{t+h} f(\textbf{q}(\tau),\tau) d\tau +\mathcal{O}(h^{p+1}),
	\]
	which is the same as $c(\textbf{q}(t))=\bar{c}(t+h,t, \textbf{q}(t)) + \mathcal{O}(h^{p+1})$. Consequently, the momentum update \eqref{eq_psi2}, in which $[\textbf{q}, \textbf{p}]=[\textbf{q}(t), \textbf{p}(t)]$ and $[\textbf{Q}, \textbf{P}]=[\textbf{q}(t+h), \textbf{p}(t+h)]$, satisfies
	\[\
		\textbf{P}=(\textbf{p}-\nabla c(\textbf{q})) \left(\textbf{g}'(\textbf{q})\right)^{-1} \\
		= \left( \textbf{p}-\frac{\partial \bar{c}}{\partial \textbf{q}}(t+h,t,\textbf{q}) \right) \left[ \frac{\partial \bar{\textbf{g}}}{\partial \textbf{q}}(t+h,t,\textbf{q}) \right]^{-1} + \mathcal{O}(h^{p+1}),
	\]
	where the last estimate uses regularities of $\bar{c}$ and $\bar{\textbf{g}}$ in $\textbf{q}$, which are ensured by Gronwall's lemma. This shows that the momentum truncation error is also of order $p+1$.
\end{proof}

\bibliographystyle{unsrt}
\small
\bibliography{molei23}

\end{document}